\documentclass[tombow,center]{aspm}

%%%%%%%%%%%%%%%%%%%%%%%%%%%%%%%%%
% \articleinfo{Book Title}{Vol}{Year}
%%%%%%%%%%%%%%%%%%%%%%%%%%%%%%%%%

\articleinfo{Stochastic Analysis, Random Fields and Integrable Probability -- Fukuoka 2019}{87}{2021}

%%%%%%%%%%%%%%%%%%%%%%%%%%%%%%%%%
\setcounter{page}{415}
%%%%%%%%%%%%%%%%%%%%%%%%%%%%%%%%%
%\setcounter{page}{1}
%%%%%%%%%%%%%%%%%%%%%%%%%%%%%%%%%
%------------% acronym %-------------%
\usepackage{acronym}
\acrodef{KPZ}{Kardar--Parisi--Zhang}
\acrodef{SHE}{Stochastic Heat Equation}
\acrodef{LDP}{Large Deviation Principle}
\acrodef{GUE}{Gaussian Unitary Ensemble}
\acrodef{BM}{Brownian Motion}
%\acrodef{LD}{Large Deviation}

%------------% operators %-------------%
\renewcommand{\Pr}{\mathbf{P}}	% probability
\newcommand{\Ex}{\mathbf{E}}	% expectation
\renewcommand{\d}{\mathrm{d}}	% differential

%------------% functions and variables %-------------%
\newcommand{\ind}{\mathbf{1}}	%indicator function
\newcommand{\Ai}{\mathrm{Ai}}	% Airy function
\newcommand{\aip}{\mathbf{a}}	% Airy PP
\newcommand*{\Cdot}{{\raisebox{-0.5ex}{\scalebox{1.8}{$\cdot$}}}} % largedot
\newcommand{\img}{\mathbf{i}}	% \sqrt{-1}

%------------% random operators %-------------%
\newcommand{\sao}{\mathcal{A}}	% stochastic Airy operator
\newcommand{\saolambda}{\boldsymbol{\lambda}} %eigenvalues of sao
\newcommand{\saolambdaa}{\boldsymbol{\eta}} %eigenvalues of sao
\newcommand{\hill}{\mathcal{H}}	% Hill's operator
\newcommand{\hilllambda}{\boldsymbol{\lambda}} %eigenvalues of hill

%------------% spaces %-------------%
\newcommand{\R}{\mathbb{R}}
\newcommand{\Z}{\mathbb{Z}}

%------------% bar, hat, tilde, etc %-------------%

\newcommand{\til}{\widetilde}
\renewcommand{\hat}{\widehat}

%------------% other macros %-------------%
\allowdisplaybreaks
\usepackage{graphicx}	%for including graphs
%%%%%%%%%%%%%%%%%%%%%%%%%%%%%%%%%

\usepackage{verbatim}
\usepackage{amssymb}
\usepackage{amsbsy}
\usepackage{amscd}
\usepackage{amsmath}
\usepackage{amsthm}
\usepackage[mathscr]{eucal}

\newtheorem{theorem}{Theorem}
\newtheorem{prop}[theorem]{Proposition}

\title[LDs of the KPZ equation via the Stochastic Airy Operator]{Large deviations of the KPZ equation\\{}via the Stochastic Airy Operator}

\author[Li-Cheng Tsai]{Li-Cheng Tsai}

\address{Department of Mathematics, Rutgers University --- New Brunswick\\{}%
110 Frelinghuysen Road, Piscataway, NJ 08854 USA}

\email{lctsai.math@gmail.com}

%%%%%%%%%%%%%%%%%%%%%%%%%%%%%%%%%
% \rcvdate{Received Date}
% \rvsdate{Revised Date}
%%%%%%%%%%%%%%%%%%%%%%%%%%%%%%%%%

\rcvdate{December 26, 2019}
\rvsdate{September 25, 2020}

%%%%%%%%%%%%%%%%%%%%%%%%%%%%%%%%%
% \subjclass[2010]{AMS MSC}
%%%%%%%%%%%%%%%%%%%%%%%%%%%%%%%%%
%%example%%%%%%%%%%%%%%%%%%%%%%%%%%
\subjclass[2010]{Primary\,60F10; Secondary\,60H25}
%%%%%%%%%%%%%%%%%%%%%%%%%%%%%%%%%

%\subjclass[2010]{}

%%%%%%%%%%%%%%%%%%%%%%%%%%%%%%%%%
% \keywords{***, ****}
%%%%%%%%%%%%%%%%%%%%%%%%%%%%%%%%%

\keywords{Kardar--Parisi--Zhang equation, large deviations, Airy point process, random operators, stochastic Airy operator.}

%%%%%%%%%%%%%%%%%%%%%%%%%%%%%%%%%
\begin{document}

\begin{abstract}
In this article we review the ideas in \cite{tsai18} toward proving the one-point, lower-tail large deviation principle for the  Kardar--Parisi--Zhang equation.
\end{abstract}

\maketitle

\section{Introduction}

The \ac{KPZ} equation was introduced in \cite{kardar86} as a model of random surface growth.
In one spatial dimensional the equation reads
\begin{align}
	\label{e.kpz}
	\partial_t h = \tfrac12 \partial_{xx} h + \tfrac12 (\partial_x h )^2 + \xi,
\end{align}
where $ \xi=\xi(t,x) $ denotes the spacetime white noise, and the solution $ h=h(t,x) $ is a random function that describes the height at time $ t\in\R_+ $ and position $ x\in\R $. 
Together with a host of related models, the \ac{KPZ} equation has been intensively studied,
due to its rich connections to other physical phenomena and mathematical structures.
We refer to \cite{ferrari10,quastel11,corwin12,quastel15,chandra17,corwin19} for reviews on studies related to the \ac{KPZ} equation.

Due to the roughness of $ \xi $, the solution $ h $ is only $ a $-H\"{o}lder continuous in $ x $ for $ a<\frac12 $. This fact together with the presence of the nonlinear term $ (\partial_x h )^2 $ makes the \ac{KPZ} equation \eqref{e.kpz} ill-posed. New theories have been built toward making sense of the \ac{KPZ} equation and constructing the corresponding solution. We refer to \cite{hairer14,gubinelli15,gonccalves14,gubinelli18} and the references therein for related developments.
An alternative formulation to these theories is the \textbf{Hopf--Cole solution}. That is, a \emph{formal} exponentiation $ Z(t,x) := e^{h(t,x)} $ brings \eqref{e.kpz} to the \ac{SHE}
\begin{align}
	\label{e.she}
	\partial_t Z = \tfrac12 \partial_{xx} Z + \xi Z .
\end{align}
This equation is well-posed \cite{walsh86,bertini95} and the solution $ Z(t,x) $ is strictly positive for $ t>0 $ and for generic nonnegative and nonzero initial data \cite{mueller91,flores14}. These facts allow us to define the Hopf--Cole solution $ h(t,x) := \log Z(t,x) $. The Hopf--Cole formulation arises in several discrete or regularized versions of the \ac{KPZ} equation, and other notions of solutions from the aforementioned theories have been shown to coincide with the Hopf--Cole solution within the relevant class of initial data.

In this article we are concerned with the one-point, lower-tail \ac{LDP} for the \ac{KPZ} equation.
Consider the Hopf--Cole solution $ h(t,x) := \log Z(t,x) $ with the initial data $ Z(0,x) = \delta(x) $, a Dirac delta at the origin.
It is known that, for large time $ t\gg 1 $, the height $ h(2t,0) $ concentrates around $ -\frac{t}{12} $.
The question of interest here is to estimate the probability of $ h(2t,0) $ being much smaller than this typical value $ -\frac{t}{12} $.
This question has been much studied recently in the physics and mathematics communities.
In particular, the physics works \cite{sasorov17,corwin18,krajenbrink18} each employed a different method to derive the explicit rate function 
\begin{align*}
	\Pr\big[ h(2t,0) \leq -\tfrac{t}{12} + tz \big] \approx e^{-t^2\Phi_-(z)},
	\quad
	z<0,
\end{align*}
where
\begin{align}
	\label{e.rate}
	\Phi_-(z) := \tfrac{4}{15\pi^6}(1-\pi^2z)^{\frac52}-\tfrac{4}{15\pi^6}+\tfrac{2}{3\pi^4}z-\tfrac{1}{2\pi^2} z^2.
\end{align}
The first rigorous proof came soon later, via yet another method:
\begin{theorem}[\cite{tsai18}]
\label{t.main.}
Consider the Hopf--Cole solution $ h(t,x) := \log Z(t,x) $ of the \ac{KPZ} equation with the initial data $ Z(0,x)=\delta(x) $. For any $ z<0 $,
\begin{align*}
	\lim_{t\to\infty} \frac{1}{t^2} &\log \Pr\big[ h(2t,0) \leq -\tfrac{t}{12} + tz \big]
	=
	-\Phi_-(z).
\end{align*}
\end{theorem}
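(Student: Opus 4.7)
The plan is to reduce the one-point lower tail of the KPZ height to a large deviation problem for the Airy point process, and then to analyze the latter using the stochastic Airy operator $\hill$.

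First, I would invoke the Amir--Corwin--Quastel / Borodin--Gorin identity, which for the narrow-wedge initial data expresses the Laplace transform of $e^{h(2t,0)+t/12}$ as an expectation over the Airy point process $\{\aip_i\}_{i\geq 1}$:
\begin{align*}
  \Ex\bigl[\exp\bigl(-e^{h(2t,0)+t/12+s}\bigr)\bigr]
  \;=\; \Ex\Bigl[\prod_{i\geq 1}\tfrac{1}{1+\exp(t^{1/3}(s+\aip_i))}\Bigr].
\end{align*}
This converts the lower tail $\{h(2t,0)\leq -t/12+tz\}$ into a rightward push of the Airy points. To pass from the Laplace transform back to the probability I would run the usual sandwich: for the upper bound, choose $s\approx -tz$ so that the exponent $h(2t,0)+t/12+s$ is nonpositive on the target event, giving $\Pr[h(2t,0)\leq -t/12+tz]\lesssim \Ex[\exp(-e^{h(2t,0)+t/12+s})]$; for the matching lower bound, integrate the identity against a suitable measure in $s$ and localize the mass near the target level, exploiting that the Laplace transform decreases monotonically in $s$.

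Second, the analysis of the right-hand side. Taking logarithms, the product becomes $-\sum_{i\geq 1}\log(1+e^{t^{1/3}(s+\aip_i)})$; with $s\asymp -tz>0$, this sum is driven by the many Airy points that must sit above a macroscopic threshold, so the product is small only when the entire low-index portion of the spectrum undergoes an atypical displacement. The key input is the identification of $\{\aip_i\}$ as the eigenvalues (at $\beta=2$) of the stochastic Airy operator $\hill = -\partial_x^2 + x + 2 B'(x)$ on $[0,\infty)$ with Dirichlet boundary condition. Via the Riccati transform, the eigenvalue-counting function of $\hill$ becomes a diffusion hitting-time problem, for which Freidlin--Wentzell-type techniques yield a speed-$t^2$ \ac{LDP} for the suitably rescaled empirical measure of the displaced Airy points, with rate functional given by an action integral over admissible pairs (density, drift).

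Third, I would combine the exact identity with this Airy LDP to obtain a variational expression for $\Phi_-(z)$. Solving the Euler--Lagrange equations should produce an explicit optimizer of semicircle type, supported on a compact interval determined by $z$, and substituting back into the action should recover exactly the formula \eqref{e.rate}. The principal obstacle is the middle step: one needs \emph{joint} control of many eigenvalues of $\hill$ at the correct speed $t^2$, far beyond classical single-eigenvalue or Tracy--Widom fluctuation theorems. The SAO together with the Riccati diffusion supplies the right framework, but establishing matching upper and lower LDP bounds, and then verifying that the minimizer is explicit enough to reproduce the closed form $\Phi_-$, is where the technical heart of the argument concentrates.
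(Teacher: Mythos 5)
Your first and last steps track the paper closely: the Borodin--Gorin identity \eqref{e.bg}, the observation that $F(x)=\exp(-e^x)$ (equivalently your sandwich in $s$) is a valid proxy for $\ind_{x<0}$ even at speed $t^2$ because $F$ decays doubly exponentially, the passage to the stochastic Airy operator at $\beta=2$ via \cite{ramirez11}, and the final variational problem \eqref{e.var} whose optimizer reproduces $\Phi_-$. All of that is the right skeleton. One small correction of emphasis: the paper does not establish a full LDP for the empirical measure $\mu_{\aip,t}$ and then apply Varadhan; it computes the specific exponential functional in \eqref{e.t.main} directly, which is weaker than what you propose and is what makes the argument tractable.

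The genuine gap is in your middle step, which you yourself flag as the technical heart but resolve only by invoking ``Freidlin--Wentzell-type techniques'' for the Riccati diffusion. That is not the paper's mechanism, and as stated it would run into serious trouble: the Riccati process \eqref{e.riccati} explodes and restarts, the relevant spectral parameters $\lambda$ sweep an interval of length $t^{2/3}$ simultaneously, and one needs \emph{matching} upper and lower bounds at speed $t^2$ for a functional of the entire explosion-counting profile $\lambda\mapsto N(\lambda)$ --- a generic sample-path LDP for the diffusion does not obviously deliver this. The paper's actual route has two ingredients you are missing. First, a localization step: the Riccati comparison (monotonicity in the potential and in the entrance value) sandwiches $N(\lambda)$ between the counting functions of \emph{independent} Hill's operators \eqref{e.hill} on mesoscopic windows of length $\Xi=t^a$, $a\in(-\tfrac13,\tfrac23)$, factorizing the expectation (Proposition~\ref{p.local}). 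Second, and crucially for the upper bound, the WKB condition: Proposition~\ref{p.wkb} is a purely deterministic operator inequality, proved by testing the variational characterization of partial eigenvalue sums against the Fourier basis, showing that replacing $W'$ by its window average can only increase $-\sum_i(r+\lambda_i)_-$. This reduces the upper bound to a one-dimensional Girsanov computation over the constant drift mode, matching the lower bound obtained by tilting $W$ by a constant drift $t^{2/3}v_j$ and applying Weyl asymptotics. Without an argument of this kind --- i.e., a reason why only the zeroth Fourier mode of $W$ matters for this particular linear statistic --- your upper bound does not close.
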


The four different methods \cite{sasorov17,corwin18,krajenbrink18,tsai18} were later shown to be closely related in \cite{krajenbrink19}.
Two new methods have been recently obtained, in the mathematically rigorous work \cite{cafasso19} and the physics work \cite{ledoussal19}.
This article focuses on reviewing the method used in~\cite{tsai18}. 

%\begin{rmk}
It is known that the large deviation rate function depends on the initial data,
and there  has been many recent results for general initial data or special initial data other than $ Z(0,x)=\delta(x) $.
In the mathematics literature,
the work \cite{corwin20general} proved upper- and lower-tail probability bounds for general initial data;
the work \cite{kim19} proved lower-tail probability bounds for the narrow-wedge initial data in the half-space geometry;
the work \cite{ghosal20} proved the one-point, upper-tail \ac{LDP} for general initial data;
the work \cite{lin20} proved the one-point, upper-tail \ac{LDP} for the narrow-wedge initial data in the half-space geometry.
For the physics literature we refer to \cite{krajenbrink19a} and the references therein.
%\end{rmk}

\subsection*{Acknowledgements.}
Research partially supported by the NSF through DMS-1712575.

\section{Exact formulas and the stochastic Airy operator}
Hereafter $ Z(t,x) $ denotes the solution of the \ac{SHE} with $ Z(0,x)=\delta(x) $ and $ h(t,x) := \log Z(t,x) $.
We recall the formula that expresses the Laplace transform of $ Z(2t,0) $ in terms of a Fredholm determinant:
\begin{align}
	\label{e.fred.det}
	\Ex\big[ e^{-sZ(2t,0)e^{\frac{t}{12}}} \big] = \det( I - K_{s,t} ),
	\quad
	s,t>0,
\end{align}
where $ K_{s,t} $ is a trace-class operator on $ L^2[0,\infty) $ with the integral kernel
$ K_{s,t}(x,y) := \int_{\R} \frac{\d r}{1+s^{-1}e^{-t^{1/3}r}} \Ai(x+r) \Ai(y+r) $, and $ \Ai $ denotes the Airy function.
The formula (or a closely related version of it) 
was derived simultaneously and independently in the works \cite{calabrese10,amir11,dotsenko10,sasamoto10}, and \cite{amir11} provided a rigorous proof.

The formula \eqref{e.fred.det} provides access to the distribution of $ h(2t,0) $,
for example, in deriving the Tracy--Widom fluctuation of $ h(2t,0) $ at large time \cite{calabrese10,amir11,dotsenko10,sasamoto10}.
Another instance is the upper-tail large deviations.
For $ s=e^{-zt} $ and $ z>0 $, the determinant in~\eqref{e.fred.det} behaves perturbatively,
and (with some modifications) can be used to derive the upper-tail large deviations of $ h(2t,0) $ \cite{ledoussal16,das19}.

In the lower-tail regime considered here, the determinant in~\eqref{e.fred.det} does not provide a handy access to the \ac{LDP}. 
Instead, we appeal to a different expression of the formula from \cite{borodin16}:
\begin{align}
	\label{e.bg}
	\Ex\big[ e^{-sZ(2t,0)e^{\frac{t}{12}}} \big] 
	=
	\Ex\Big[ \prod_{i=1}^\infty \frac{1}{1+se^{t^{1/3}\aip_i}} \Big],
	\quad
	t,s>0.
\end{align}
On the r.h.s., the expectation is taken with respect to the \textbf{Airy point process}
$ -\infty<\ldots<\aip_3<\aip_2<\aip_1<\infty $, which is the determinantal point process on $ \R $ with the correlation kernel $ K_{\text{Airy PP}}(x,y) := \int_{\R_+}\d r \Ai(x+r)\Ai(y+r) $.
In~\eqref{e.bg}, substitute in $ s=e^{-tz} $ and $ Z(2t,0)=e^{h(2t,0)} $. We rewrite the formula as
\begin{align}
	\label{e.bg.}
	\Ex\big[ F(h(2t,0)+\tfrac{t}{12}-tz) \big] 
	=
	\Ex\Big[ \exp\Big(-t \int_{\R} \d \mu_{\aip,t}(a) \, \psi_{t,z}(a) \Big) \Big],
\end{align}
where $ F(x) := \exp(-e^x) $ and $ \psi_{t,z}(a) := \log(1+e^{-t(z+a)}) $,
and $ \mu_{\aip,t}(a) := t^{-1}\sum_{i=1}^\infty \delta_{-t^{-2/3}\aip_i}(a) $ denotes the empirical measure of the scaled, spaced-reversed Airy point process.

As noted in \cite{corwin18a}, the formulas \eqref{e.bg}--\eqref{e.bg.} provide the suitable framework for the lower-tail \ac{LDP}.
To see how, we discuss the left and right hand sides of \eqref{e.bg.}:
\begin{enumerate}
\item[(LHS)] \label{observation1}
	The function $ F(x) $ approaches $ 0 $ and $ 1 $ respectively as $ x\to\infty $ and as $ x\to -\infty $.
	Together with the $ t $ scaling, the function $ F(x) $ serves as a good proxy for $ \ind_{x<0} $, thereby
	\begin{align*}
		\text{ (l.h.s.\ of \eqref{e.bg.})}
		\approx
		\Pr\big[ h(2t,0)+\tfrac{t}{12}-tz \leq 0 \big].
	\end{align*}
	Note that this approximation holds even in the large deviation regime, because $ F(x)\to 0 $ \emph{super}-exponentially as $ x\to\infty $.
\item[(RHS)] \label{observation2}
	For $ t\gg 1 $, $ \psi_{t,z}(a) \approx t(z+a)_- $, where $ x_- := \max\{-x,0\} $ denotes the negative part of $ x $.
	Hence
	\begin{align*}
		\text{ (r.h.s.\ of \eqref{e.bg.})}
		\approx
		\Ex\Big[ \exp\Big( -t^2 \int_{\R} \d \mu_{\aip,t}(a) (a+z)_- \Big) \Big].
	\end{align*}
	Assuming that the random measure $ \mu_{\aip,t} $ enjoys an \ac{LDP} with speed $ t^2 $ and a rate function $ I_{\aip} $,
	we should have
	\begin{align*}
		\text{ (r.h.s.\ of \eqref{e.bg.})}
		\approx
		\exp\Big( -t^2 \inf_{\mu} \Big\{ \int_{\R} \d \mu(a) \,  (a+z)_- + I_\aip(\mu) \Big\}  \Big),
	\end{align*}
	where the infimum is taken over a suitable class of measures $ \mu $.
\item[(Var)] \label{observation3}
	Combining the preceding two observations, one expects
	\begin{align}
		\label{e.var}
		\Phi_-(z) = \inf_{\mu} \Big\{ \int_{\R} \d \mu(a) \,  (a+z)_- + I_\aip(\mu) \Big\}.
	\end{align}
\end{enumerate}

The observations (RHS)--(Var) were first made and noted in \cite{corwin18a}.
Based on these observations, \cite{corwin18a} obtained detailed bounds on the tail probability of $ h(2t,0) $.
The physics work \cite{corwin18} continued along this path.
It is known that the Airy point process $ \{\aip_i\}_{i=1}^\infty $ is the limit near the top edge of the spectrum of the \ac{GUE}.
Employing a non-rigorous limit transition from the known rate function of the \ac{GUE} \cite{benarous97},
the work \cite{corwin18} obtained a conjectural form of $ I_\aip $,
and solved the variational problem \eqref{e.var} to obtain the rate function \eqref{e.rate}.

The work \cite{tsai18} also proceeds through \eqref{e.var}, but, 
instead of viewing the Airy point process as a limit of the GUE, appeals to the stochastic Airy operator.
For $ \beta>0 $, consider the random operator
\begin{align}
	\label{e.sao}
	\sao_\beta = -\tfrac{\d^2~}{\d x^2} + x + \tfrac{2}{\sqrt\beta} B',
	\quad
	x\in [0,\infty),
\end{align}
where $ B' $ denotes the derivative of a \ac{BM}.
It is standard to construct $ \sao_\beta $ as an unbounded, self-adjoint operator on $ L^2[0,\infty) $ with the Dirichlet boundary condition at $ x=0 $,
and the so constructed operator has a pure-point, bounded below spectrum, $ -\infty<\saolambda_1<\saolambda_2<\saolambda_3<\ldots $.
This spectrum offers an alternative description of the Airy point process:

\begin{theorem}[\cite{ramirez11}]
The spectrum of $ \sao_2 $ is equal in law to the space-reversed Airy point process, i.e., $ \{\saolambda_i\}_{i=1}^\infty \stackrel{\text{law}}{=} \{-\aip_i\}_{i=1}^\infty $.
\end{theorem}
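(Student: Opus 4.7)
The plan is to identify both the spectrum of $\sao_2$ and the Airy point process as the common scaling limit of a single tridiagonal matrix model at its top edge, following the approach of Ram\'{i}rez--Rider--Vir\'{a}g. To set this up, I would invoke the Dumitriu--Edelman tridiagonal representation: for $\beta=2$, the eigenvalues of the $n\times n$ GUE are jointly equal in law to those of a symmetric tridiagonal matrix $H_n$ with independent Gaussian diagonal entries and chi-distributed subdiagonal entries. Reflect and rescale near the top edge by setting $\til H_n := n^{1/6}(2\sqrt n\, I - H_n)$. A Taylor expansion of the entries, with lattice spacing $n^{-1/3}$, shows that on smooth test functions $\til H_n$ formally approximates $-\partial_{xx} + x$ plus a discrete noise built from the centered diagonal and subdiagonal fluctuations; since each fluctuation has variance of order $n^{-1/3}$ per site, the summed noise converges after integration to a Brownian motion, so that $\til H_n$ serves as the natural discrete prelimit of $\sao_\beta$.

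Second, I would upgrade this informal convergence to joint convergence in law of the lowest $k$ eigenvalues of $\til H_n$ to $\saolambda_1,\ldots,\saolambda_k$, for every fixed $k$. The natural tool is the min-max variational characterization: after integrating the noise once to produce a Brownian path, one recasts both $\til H_n$ and $\sao_\beta$ as bounded quadratic forms on a weighted $H^1$-type space of functions vanishing at $0$ and decaying at $+\infty$, and shows $\Gamma$-convergence of these forms together with tightness of low-energy trial functions. The step I expect to be the main obstacle is precisely this tightness: because $x + (2/\sqrt{\beta})\, B'$ is only a distribution, one must show that the discrete eigenfunctions concentrate on an $O(1)$-scale interval with uniform tail control, so that the variational formula for $\saolambda_j$ passes to the limit. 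A Riccati or oscillation-theoretic analysis of the shooting equation provides the decay estimates needed here.

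Third, on the GUE side the classical top-edge result asserts that $n^{1/6}(\lambda_{(j)}^{\text{GUE}} - 2\sqrt n) \Rightarrow \aip_j$ jointly in law for every fixed $j$, where $\lambda_{(1)} > \lambda_{(2)} > \ldots$ are the eigenvalues of $H_n$ in decreasing order and $\aip_j$ is the $j$-th point of the Airy point process with kernel $K_{\text{Airy PP}}$. Since the $j$-th smallest eigenvalue of $\til H_n$ equals $n^{1/6}(2\sqrt n - \lambda_{(j)}^{\text{GUE}})$, it converges in law to $-\aip_j$. Comparing with the limit identified in the previous step yields $(\saolambda_1,\ldots,\saolambda_k) \stackrel{\text{law}}{=} (-\aip_1,\ldots,-\aip_k)$ for every $k$. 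This promotes to equality in law of the full sequences because $\sao_2$ has compact resolvent on the relevant weighted $L^2$ space, so its spectrum almost surely accumulates only at $+\infty$, which is exactly what is needed to pass from finite-dimensional agreement to agreement as random elements of the space of locally finite point configurations on $\R$.
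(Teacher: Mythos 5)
The paper offers no proof of this statement; it is quoted directly from \cite{ramirez11}, and your proposal is precisely an outline of the argument in that reference: Dumitriu--Edelman tridiagonalization, edge rescaling at spacing $n^{-1/3}$, convergence of the low-lying eigenvalues of the discrete operator to those of $\sao_\beta$ via the variational/Riccati analysis, and matching against the classical determinantal edge limit of the \ac{GUE} to identify the limit with $\{-\aip_i\}$. The outline is correct and correctly locates the hard step (tightness and decay of near-minimizers needed to pass the min-max formulas to the limit), though of course it compresses the bulk of \cite{ramirez11} into a sketch.
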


The main theorem of \cite{tsai18} can now be stated as:
\begin{theorem}[\cite{tsai18}]
\label{t.main}
For any $ \beta>0 $ and $ z<0 $, let $ \saolambda_i $, $ i=1,2,\ldots $, denote the eigenvalues of $ \sao_\beta $. We have
\begin{align}
	\label{e.t.main}
	\lim_{t\to\infty} \frac{1}{t^2} \log \Big( \Ex\big[ e^{ - \sum_{i=1}^\infty (\saolambda_i t^{1/3}+zt )_- } \big] \Big)
	=
	- \big(\tfrac{2}{\beta}\big)^5 \Phi_-\big( \big(\tfrac{\beta}{2}\big)^2 z \big).
\end{align}
\end{theorem}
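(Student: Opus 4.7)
The plan is to recast the left-hand side of \eqref{e.t.main} as a Laplace-type expectation over the empirical measure of the rescaled spectrum of $\sao_\beta$, establish a large deviation principle with speed $t^2$ for that empirical measure, and then identify the resulting variational problem with the right-hand side of \eqref{e.t.main} via an Euler--Lagrange computation and a $\beta$-rescaling.

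First I would set $\mu_{\saolambda,t} := t^{-1}\sum_{i\geq 1}\delta_{t^{-2/3}\saolambda_i}$. Using $(cx)_-=c\,x_-$ for $c>0$, the exponent inside the expectation in \eqref{e.t.main} rearranges as
\[
\sum_{i=1}^\infty (\saolambda_i t^{1/3} + zt)_- = t^2 \int_\R (a+z)_-\,\d\mu_{\saolambda,t}(a),
\]
so the quantity inside the logarithm is $\Ex\bigl[\exp(-t^2\Psi_z(\mu_{\saolambda,t}))\bigr]$ with $\Psi_z(\mu):=\int(a+z)_-\,\d\mu(a)$. Only mass strictly to the left of $-z>0$ contributes, and in the case $\beta=2$ this matches, via $\{\saolambda_i\}\stackrel{\mathrm{law}}{=}\{-\aip_i\}$, the right-hand side of \eqref{e.bg.}.

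The crux is an LDP for $\mu_{\saolambda,t}$ restricted to $(-\infty,-z]$, with speed $t^2$ and an explicit convex rate function $I_{\sao,\beta}$. The standard tool is the Ricatti transform $p:=\phi'/\phi$, which converts the eigenvalue equation $\sao_\beta\phi=\Lambda\phi$ (with $\Lambda$ a spectral parameter) into
\[
\d p(x) = \bigl(x - \Lambda - p(x)^2\bigr)\,\d x + \tfrac{2}{\sqrt\beta}\,\d B(x), \qquad p(0)=+\infty,
\]
and identifies the counting function $N_\beta(\Lambda):=\#\{i:\saolambda_i\leq \Lambda\}$ with the number of times $p$ blows up to $-\infty$ on $[0,\infty)$. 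The event $\{\saolambda_i\ll 0\}$ is thus a large-deviation event for this diffusion, and after a natural $t$-rescaling the effective noise intensity becomes of order $t^{-1}$, placing the problem in the Freidlin--Wentzell small-noise regime at speed $t^2$; the contraction principle then transfers an LDP for the driving Brownian motion to $N_\beta$ and thence to $\mu_{\saolambda,t}$. The explicit factor $\beta^{-1/2}$ in front of the noise enters $I_{\sao,\beta}$ homogeneously, which in the variational problem translates into the $(2/\beta)^5$ prefactor and the rescaled argument $(\beta/2)^2 z$ on the right-hand side of \eqref{e.t.main}.

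Varadhan's lemma (with super-exponential integrability trivially supplied by $\Psi_z\geq 0$) then yields
\[
\lim_{t\to\infty}\frac{1}{t^2}\log\Ex\bigl[e^{-t^2\Psi_z(\mu_{\saolambda,t})}\bigr] = -\inf_\mu\bigl\{\Psi_z(\mu) + I_{\sao,\beta}(\mu)\bigr\},
\]
and an Euler--Lagrange analysis of the optimizing measure, combined with the $\beta$-homogeneity of $I_{\sao,\beta}$, matches the infimum to $(2/\beta)^5\Phi_-((\beta/2)^2 z)$. I expect the main obstacle to be the LDP itself: matching upper and lower bounds for the joint behavior of \emph{infinitely many} eigenvalues, establishing tightness of $\mu_{\saolambda,t}$ in a topology fine enough to make $\Psi_z$ continuous but coarse enough for the Ricatti-to-counting map to inherit continuity, and controlling the far-left tail of the spectrum (eigenvalues well below $-zt^{2/3}$), where the Ricatti SDE is strongly nonlinear and small-noise perturbation theory must be applied with care. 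The subsequent closed-form solution of the variational problem should be a routine calculus-of-variations computation once $I_{\sao,\beta}$ is in hand.
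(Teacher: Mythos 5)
Your setup is correct and matches the paper's motivation: rewriting the exponent as $t^2\int(a+z)_-\,\d\mu_{\saolambda,t}(a)$, recognizing the Riccati transform as the right tool, and the scaling heuristic that eigenvalues of order $t^{2/3}$ put the Riccati diffusion in a small-noise regime at speed $t^2$ are all in line with \cite{tsai18}. However, as written the proposal has a genuine gap: its entire substance is deferred to an LDP for $\mu_{\saolambda,t}$ with a rate function $I_{\sao,\beta}$ that is never identified, followed by an Euler--Lagrange computation that is never performed. Without an explicit $I_{\sao,\beta}$ there is nothing to match to $(2/\beta)^5\Phi_-((\beta/2)^2z)$. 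Moreover, the two tools you invoke do not apply as stated. The contraction principle requires continuity of the map from the driving Brownian path to the spectrum, but the explosion count of the Riccati diffusion is \emph{not} a continuous functional of the path (explosions are created and destroyed by arbitrarily small perturbations), so the LDP cannot simply be ``transferred.'' And Varadhan's lemma requires $\Psi_z$ to be continuous in whatever topology the LDP holds; $\Psi_z(\mu)=\int(a+z)_-\,\d\mu$ is an unbounded linear functional, and the measures $\mu_{\saolambda,t}$ have infinite total mass, so the weak topology is too coarse and the far-left tail of the spectrum must be controlled by a separate argument. You flag all of these as ``obstacles,'' but they are precisely the content of the theorem, not technical afterthoughts.

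It is worth noting that the paper never proves an LDP for the empirical measure at all; it bounds the Laplace transform directly. The argument (i) localizes $\sao_\beta$ into independent Hill's operators on intervals of length $\Xi=t^a$, $a\in(-\frac13,\frac23)$, by coupling the Riccati ODEs and comparing explosion counts (Proposition~\ref{p.local}); (ii) gets the lower bound by a Girsanov tilt of each $W$ by a \emph{constant} drift $t^{2/3}v_j$, balancing the Gaussian cost $\frac12 t^{a+4/3}v_j^2$ against the Weyl count of the shifted Laplacian, optimizing over $v_j$, and summing a Riemann sum that evaluates to $(2/\beta)^5\Phi_-((\beta/2)^2z)$; and (iii) gets the matching upper bound from the deterministic ``WKB'' comparison (Proposition~\ref{p.wkb}): testing the variational characterization of $\sum_{i\le N}\lambda_i$ against the Fourier basis shows that replacing $W'$ by its average over the interval can only increase $-\sum_i(r+\lambda_i)_-$, i.e., constant drifts are the optimal deviations. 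This last step is exactly the ingredient your plan is missing --- some reason why the infimum in your variational problem is attained on constant-drift configurations --- and without it the upper bound has no content.
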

\noindent%
Theorem~\ref{t.main.} then follows from Theorem~\ref{t.main} for $ \beta=2 $ and the preceding observations (LHS)--(RHS).

We devote the rest of the article to explaining the ideas of the proof of Theorem~\ref{t.main}.
The proof proceeds in steps, which are designated in the titles of the remaining sections.

\section{Localization via Riccati transform}
\label{s.local}
The stochastic Airy operator \eqref{e.sao} has a linear potential $ x $.
Such a potential is physically relevant because it ensures that $ \sao_\beta $, which acts on the unbounded interval $ [0,\infty) $, has a pure-point spectrum.
For our analysis, however, a varying potential is inconvenient.
We hence seek to approximate $ \sao_\beta $ by a sequence of operators with translation-invariant potentials.
Fix a mesoscopic scale $ \Xi = t^{a} $, where the value of $ a $ will be specified later in \eqref{e.a.range}.
For $ j\in\Z_{\geq 0} $, consider the (shifted) Hill's operator
\begin{align}
	\label{e.hill}
	\hill_j := -\tfrac{\d^2~}{\d y^2} + j\Xi + \tfrac{2}{\sqrt{\beta}} W',
	\
	y\in[0,\Xi],
\end{align}
with the Dirichlet boundary condition at $ y=0 $ and $\Xi $, where $ W $ denotes a \ac{BM}, and for different $ j $'s the operators $ \{\hill_j\}_{j=0}^\infty $ are independent.

The idea is that the term $ j\Xi $ in \eqref{e.hill} approximates the linear potential $ x $ for $ x=y+(j-1)\Xi $, on the interval $ y\in [0,\Xi] $ or $ x\in[(j-1)\Xi,j\Xi] $.
We then seek to `piece together' the operators $ \hill_1,\hill_2,\ldots $ to approximate $ \sao_\beta $.
Doing so requires the \textbf{Riccati transform}.
The transform begins with the eigenvalue problem $ \sao_\beta f = -f''+xf+\frac{2}{\sqrt\beta}B'f = \lambda f $, with the boundary condition $ f(0)=0 $.
Viewing this equation as a second-order ODE, we perform the transformation $ g=f'/f $ into a first-order ODE of the Riccati type:
$
	g'=x-\lambda-g^2+\tfrac{2}{\sqrt\beta}B'.
$
See Figure~\ref{f.riccati} for schematic graphs of $ f $ and $ g $.
With $ g=f'/f $, we see that $ g $ explodes to $ \pm\infty $ whenever $ f $ crosses zero.
Since the underlying space $ [0,\infty) $ is one-dimensional, the $ k $-th eigenvector has exactly $ k $ roots, 
and hence the function $ g $ explodes exactly $ k $ times, excluding the explosion at $ x=0 $.

\begin{figure}[h]
\begin{center}
\includegraphics[clip, keepaspectratio, bb=0 0 398 273, width=.75\textwidth]{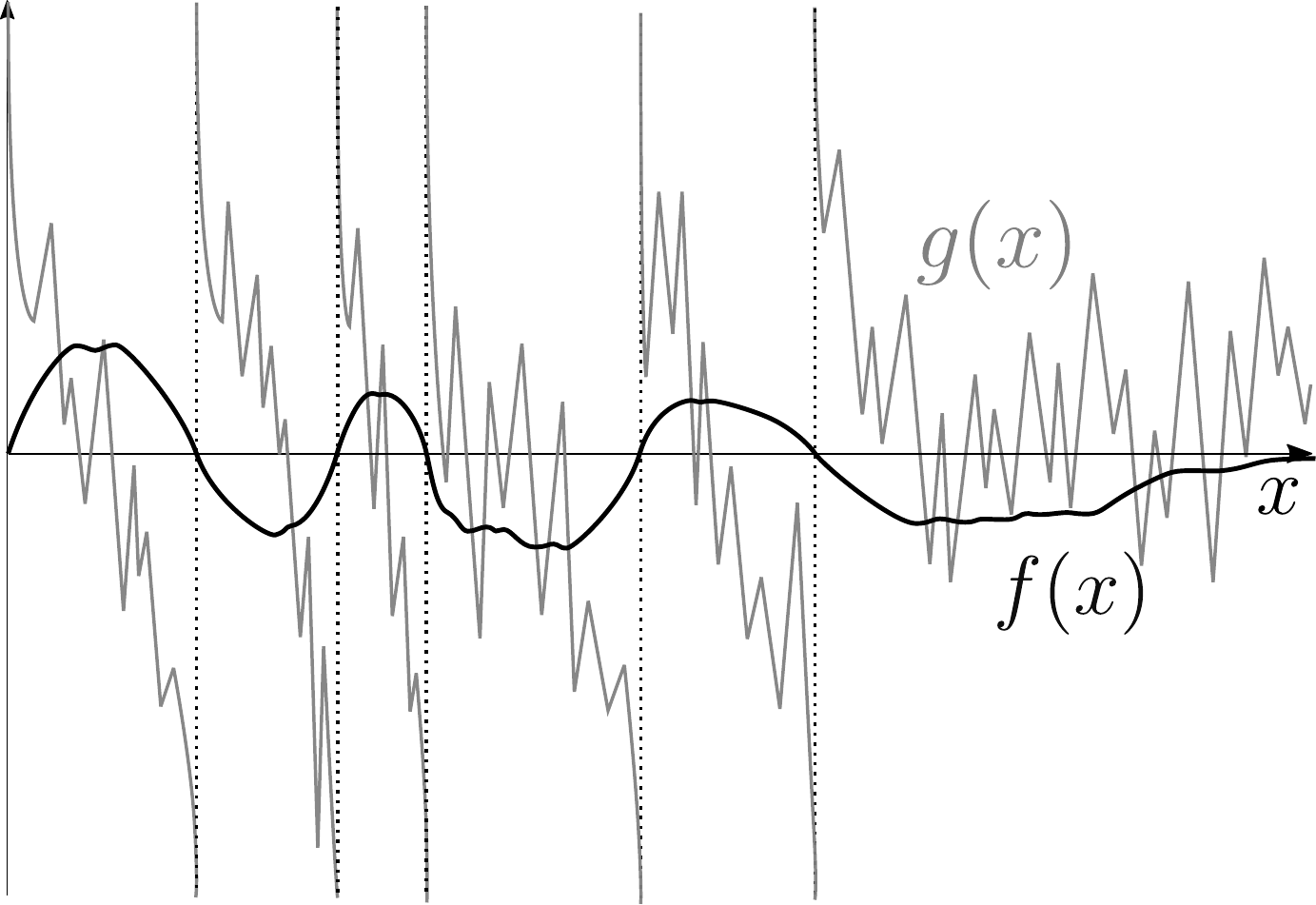}
\end{center}
\caption{Schematic graphs of $f $ and $ g $ in the Riccati transform}
\label{f.riccati}
\end{figure}
Now, let us view $ \lambda\in\R $ as an \emph{arbitrary} parameter, and solve the following ODE: 
\begin{align}
	\label{e.riccati}
	g'_\lambda(x) = x-\lambda - g^2_\lambda(x) + \tfrac{2}{\sqrt\beta}B'(x),
	\
	x\in(0,\infty),
	\
	g_\lambda(0) = +\infty.
\end{align}
The solution $ g_\lambda $ may undergo explosions to $ -\infty $, and whenever that happens we immediately reinitiate $ g_\lambda $ from $ +\infty $.
Let $ N(\lambda) := \#\{\saolambda_i\leq \lambda\} $ counts the eigenvalues of $ \sao_\beta $ at most $ \lambda $.

\begin{prop}[Prop.~3.4.~in \cite{ramirez11}]
\label{p.riccati}
Almost surely for all $ \lambda\in\R $,
$
	\#\{ x\in(0,\infty) : |g_\lambda(x)|=\infty \} = N(\lambda).
$
\end{prop}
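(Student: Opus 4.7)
My plan is to reduce the claim to the classical Sturm--Liouville oscillation theorem on a bounded interval and then pass to a truncation limit. First I would introduce the truncated stochastic Airy operator $ \sao_\beta^T := -\d^2/\d x^2 + x + (2/\sqrt\beta) B' $ on $ [0,T] $ with Dirichlet boundary conditions at both endpoints; as in \cite{ramirez11}, this is almost surely self-adjoint with discrete spectrum $ \saolambda_1^T < \saolambda_2^T < \cdots $, and a min--max/domain-monotonicity argument shows $ \saolambda_i^T \downarrow \saolambda_i $ as $ T \to \infty $. In particular, $ N^T(\lambda) := \#\{i : \saolambda_i^T \leq \lambda\} $ converges to $ N(\lambda) $ for every $ \lambda $ that is not an eigenvalue of $ \sao_\beta $.

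Next, for fixed $ \lambda \in \R $ I would consider the shooting solution $ f_\lambda $ of $ -f'' + x f + (2/\sqrt\beta) B' f = \lambda f $ on $ [0,T] $ with $ f(0) = 0 $, $ f'(0) = 1 $, interpreted pathwise in the sense of \cite{ramirez11}. Then $ g_\lambda := f_\lambda'/f_\lambda $ satisfies the Riccati SDE \eqref{e.riccati}, and each zero of $ f_\lambda $ corresponds to an explosion of $ g_\lambda $ to $ -\infty $ followed by reinitiation from $ +\infty $. The pathwise Sturm oscillation theorem on the compact interval $ [0,T] $ yields $ \#\{x \in (0,T] : f_\lambda(x) = 0\} = N^T(\lambda) $, hence
\begin{equation*}
	\#\{x \in (0,T] : |g_\lambda(x)| = \infty\} = N^T(\lambda).
\end{equation*}
Sending $ T \to \infty $, the left side is non-decreasing in $ T $ and increases to $ \#\{x \in (0,\infty) : |g_\lambda(x)| = \infty\} $, while the right side converges to $ N(\lambda) $ by the spectral step. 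This proves the identity for each fixed $ \lambda $ outside the (a.s.\ countable) spectrum of $ \sao_\beta $.

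To promote this to the almost-sure statement simultaneous in $ \lambda $, I would note that both sides are non-decreasing integer-valued functions of $ \lambda $: the right side by definition, and the left side because the Riccati drift $ x - \lambda - g^2 + (2/\sqrt\beta) B' $ is pointwise decreasing in $ \lambda $, which by a comparison argument forces additional downward explosions as $ \lambda $ grows. Establishing the identity along a countable dense set $ \lambda \in \mathbb{Q} $ off a single null set then extends it by monotonicity to every $ \lambda \in \R $. The main obstacle I anticipate is the spectral step --- making rigorous sense of $ \sao_\beta^T $ with the distributional potential $ B' $ and proving $ \saolambda_i^T \to \saolambda_i $ together with the pathwise well-posedness of $ f_\lambda $ --- whereas the Sturm counting on $ [0,T] $ and the monotonicity upgrade are classical once that analytic groundwork is in place.
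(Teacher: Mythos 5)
You should first note that the paper itself does not prove Proposition~\ref{p.riccati}: it is imported from \cite{ramirez11} without argument, so there is no in-paper proof to compare against. Your outline --- truncate to $[0,T]$, apply Sturm oscillation to the shooting solution $f_\lambda$, identify zeros of $f_\lambda$ with explosions of $g_\lambda=f_\lambda'/f_\lambda$, send $T\to\infty$ using Dirichlet domain monotonicity, and upgrade to all $\lambda$ simultaneously via monotonicity in $\lambda$ --- is essentially the standard route taken in \cite{ramirez11}, and the analytic groundwork you flag (the form construction of $\sao_\beta$ with the distributional potential $B'$, discreteness of the spectrum, and $\saolambda_i^T\downarrow\saolambda_i$) is indeed where most of the real work lies.

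There is, however, a genuine gap in your last step. Two non-decreasing integer-valued functions of $\lambda$ that agree on $\mathbb{Q}$ are forced to agree only at common continuity points; they may disagree at jumps, and here the jumps sit exactly at the (random, generically irrational) eigenvalues, which is precisely where the two sides have \emph{opposite} one-sided continuity. At $\lambda=\saolambda_k$ the shooting solution is a multiple of the $k$-th eigenfunction, which has $k-1$ zeros in $(0,\infty)$ and then decays without vanishing again, so $g_{\saolambda_k}$ explodes $k-1$ times and the explosion count is left-continuous in $\lambda$; by contrast $N(\lambda)=\#\{\saolambda_i\leq\lambda\}$ is right-continuous and equals $k$ there. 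Your density-plus-monotonicity argument therefore cannot reach $\lambda\in\{\saolambda_k\}$, and what it actually proves is the identity with $N(\lambda)$ replaced by $\#\{\saolambda_i<\lambda\}$; to close the statement you must either adopt that strict-inequality convention or treat $\lambda=\saolambda_k$ directly via the eigenfunction's zero count. A smaller point: the assertion that a drift decreasing in $\lambda$ ``forces additional downward explosions'' needs to be run as an induction over successive explosion times (after each explosion the two solutions restart from $+\infty$ at different locations, so a single pathwise domination does not persist); this is standard but should be said. Aside from these two points, the skeleton is sound.
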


We can also consider the analogous ODE for Hill's operator
\begin{align}
	\label{e.riccati.local}
	g'_{\lambda,j}(y) = j\Xi -\lambda - g^2_{\lambda,i}(y) + \tfrac{2}{\sqrt\beta}W'(y),
	\
	g_{\lambda,i}(0) = +\infty,
	\
	y\in(0,\Xi].
\end{align}
Let $ \{-\infty<\hilllambda^{(j)}_{1}\leq \hilllambda^{(j)}_{2} \leq \ldots\} $ denote the spectrum of $ \hill_j $,
and similarly $ N_j(\lambda) := \#\{\saolambda^{(j)}_i\leq \lambda\} $.
Similarly to Proposition~\ref{p.riccati}, $ N_j(\lambda) $ is equal to the number of explosions of \eqref{e.riccati.local}.

Having introduced the Riccati transforms for $ \sao_\beta $ and $ \hill_j $,
we now apply these transforms to compare the counting functions $ N(\lambda) $ and $ N_j(\lambda) $.
Localize \eqref{e.riccati} onto the interval $ x\in((j-1)\Xi,j\Xi] $,
and set $ x=y+(j-1)\Xi $ to get
\begin{align}
	\tag{\ref{e.riccati}'}
	\label{e.riccati.}
	g'_\lambda = (y+(j-1)\Xi))-\lambda - g^2_\lambda + \tfrac{2}{\sqrt\beta}B'.
\end{align}
We can now couple \eqref{e.riccati.} with \eqref{e.riccati.local} by $ W(y)=B(y+(j-1)\Xi) $.
On the interval $ y\in(0,\Xi] $ we see that \eqref{e.riccati.local} has a larger potential $ j\Xi \geq (y+(j-1)\Xi)) $
and a larger entrance value $ g_{\lambda,j}(0)=+\infty \geq g_{\lambda}((j-1)\Xi) $.
Comparison arguments then yield
\begin{align}
	\label{e.compare1}
	N_j(\lambda) \leq \#\{ x\in((j-1)\Xi,j\Xi] : |g_\lambda(x)|=\infty \}.
\end{align}
To get the reverse inequality, we apply the same coupling $ W(y)=B(y+(j-1)\Xi) $
for \eqref{e.riccati.local} with $ j\mapsto j-1 $ and for \eqref{e.riccati.} with $ j $. 
The potential is now reversely ordered $ (j-1)\Xi \leq (y+(j-1)\Xi)) $, though the entrance values are not.
The issue of entrance values can be cured by forgoing the first explosion of \eqref{e.riccati.} on $ y\in(0,\Xi] $, which gives
\begin{align}
	\label{e.compare2}
	N_{j-1}(\lambda)+1 \geq \#\{ x\in((j-1)\Xi,j\Xi] : |g_\lambda(x)|=\infty \}.
\end{align}

Next, to make use of the comparison results \eqref{e.compare1}--\eqref{e.compare2},
we rewrite the quantity of interest in Theorem~\ref{t.main} as
\begin{align}
	\label{e.reprint}
	- \sum_{i=1}^\infty (\saolambda_it^{1/3}+zt)_- 
	=
	- \hspace{-22pt}\int\limits_{(-\infty,-zt^{2/3}]} \hspace{-22pt} \d N(\lambda) \, (t^{1/3}\lambda+z)_-
	=
	-t^{1/3}\hspace{-22pt}\int\limits_{(-\infty,-zt^{2/3}]} \hspace{-22pt} \d\lambda \, N(\lambda),
\end{align}
and similarly
\begin{align}
	\label{e.reprint.}
	- \sum_{i=1}^\infty (\saolambda^{(j)}_it^{1/3}+zt)_- 
	=
	-t^{1/3}\hspace{-20pt}\int\limits_{(-\infty,-zt^{2/3}]} \hspace{-20pt} \d\lambda \, N_j(\lambda).
\end{align}
Combining~\eqref{e.compare1} and \eqref{e.reprint} gives, for any $ n\in\Z_{>0} $,
\begin{align*}
	- \sum_{i=1}^\infty (\saolambda_it^{1/3}+zt)_- 
	&\leq
	- \sum_{j=1}^n \sum_{i=1}^\infty (\saolambda^{(j)}_it^{1/3}+zt)_-.
\end{align*}
Here we forgo the explosions of \eqref{e.riccati} on $ [n\Xi,\infty) $ because they contribute negatively.
For the reverse inequality, we need to account for these remaining explosions.
To this end, consider the operator $ \sao_{\beta,n} := (-\frac{\d^2~}{\d x^2} + x + \frac{2}{\sqrt{\beta}}B') $ acting on $ x\in[n\Xi,\infty) $, with the Dirichlet boundary condition at $ x=n\Xi $.
We make $ (\sao_{\beta}+n \Xi) $ independent of $ \{\hill_{j}\}_{j=0}^{n-1} $.
Let $ \{\saolambdaa_i\}_{i=1}^\infty $ denote the spectrum of $ \sao_{\beta,n} $.
We have
\begin{align*}
	- \sum_{i=1}^\infty (\saolambda_it^{1/3}+zt)_- 
	&\geq
	-n - \sum_{j=0}^{n-1} \sum_{i=1}^\infty (\saolambda^{(j)}_it^{1/3}+zt)_- 
	- 
	\sum_{i=1}^\infty (\saolambdaa_it^{1/3}+zt)_-. 
\end{align*}
Exponentiate the preceding inequalities, take expectation,
and utilize the independence of $ \{\hill_i\}_{i=1}^n $ and of $ \{\hill_i\}_{i=0}^{n-1}, \sao_{\beta,n} $
to split the resulting expectations into products.
Note that $ \sao_{\beta,n} \stackrel{\text{law}}{=} \sao_{\beta}+n\Xi $.
We have 
\begin{prop}
\label{p.local}
For any $ n\in\Z_{>0} $, 
\begin{subequations}
\begin{align}
	\label{e.local.low}
	&\prod_{j=0}^{n-1} \Ex\big[ e^{ - \sum_{i=1}^\infty (\saolambda^{(j)}_it^{1/3}+zt)_- } \big]
	\cdot
	e^{-n}
	\cdot
	\Ex\big[ e^{ - \sum_{i=1}^\infty ((\saolambda_i+n\Xi)t^{1/3}+zt)_- } \big]
\\
	\label{e.local.up}
	&\leq
	\Ex\big[ e^{ - \sum_{i=1}^\infty (\saolambda_it^{1/3}+zt)_- } \big] \leq \prod_{j=1}^n \Ex\big[ e^{ - \sum_{i=1}^\infty (\saolambda^{(j)}_it^{1/3}+zt)_- } \big].
\end{align}
\end{subequations}
\end{prop}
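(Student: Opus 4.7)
My plan is to convert the Riccati explosion-count comparisons \eqref{e.compare1}--\eqref{e.compare2} into almost sure inequalities for the exponents $-\sum_i(\saolambda_i t^{1/3}+zt)_-$, exponentiate, take expectations, and factor using the independence built into the construction of the $\hill_j$'s and of $\sao_{\beta,n}$.

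For the upper bound in \eqref{e.local.up}, I would couple $\hill_j$, $j=1,\ldots,n$, to the restriction of $\sao_\beta$ on $((j-1)\Xi,j\Xi]$ by taking the Hill driving noise to be $W^{(j)}(y):=B(y+(j-1)\Xi)-B((j-1)\Xi)$, placing \eqref{e.riccati.} (shifted to $y\in(0,\Xi]$) and \eqref{e.riccati.local} on a common probability space. Since the Hill potential $j\Xi$ dominates $y+(j-1)\Xi$ and the Hill Riccati enters from $+\infty$, the comparison \eqref{e.compare1} holds for every $\lambda\in\R$. Summing over $j=1,\ldots,n$ gives $\sum_{j=1}^n N_j(\lambda)\le N(\lambda)$, and integrating $t^{1/3}\,\d\lambda$ over $(-\infty,-zt^{2/3}]$ with \eqref{e.reprint}--\eqref{e.reprint.} yields the almost sure inequality
\[
\sum_{j=1}^n \sum_i \big(\saolambda^{(j)}_i t^{1/3}+zt\big)_- \;\le\; \sum_i \big(\saolambda_i t^{1/3}+zt\big)_-.
\]
Negate, exponentiate, take $\Ex$, and factor across the independent $\hill_j$'s to conclude.

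For the lower bound \eqref{e.local.low}, switch to the coupling $W^{(j-1)}(y):=B(y+(j-1)\Xi)-B((j-1)\Xi)$, under which the Hill potential $(j-1)\Xi$ is dominated by $y+(j-1)\Xi$ but the entrance values are now out of order. Forgoing the first explosion of $g_\lambda$ on each interval recovers \eqref{e.compare2}. Summing over $j=1,\ldots,n$ bounds the explosions of $g_\lambda$ on $(0,n\Xi]$ by $\sum_{j=0}^{n-1} N_j(\lambda)+n$. For the tail $[n\Xi,\infty)$, introduce $\sao_{\beta,n}$ declared independent of $\{\hill_j\}_{j=0}^{n-1}$ and use the distributional identity $\sao_{\beta,n}\stackrel{\text{law}}{=}\sao_\beta+n\Xi$ together with an analogous Riccati comparison. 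Piecing these together with \eqref{e.reprint} gives the almost sure inequality
\[
\sum_i\big(\saolambda_i t^{1/3}+zt\big)_- \;\le\; n + \sum_{j=0}^{n-1}\sum_i\big(\saolambda^{(j)}_i t^{1/3}+zt\big)_- + \sum_i\big(\saolambdaa_i t^{1/3}+zt\big)_-.
\]
Exponentiating, taking $\Ex$, and factoring by independence yields \eqref{e.local.low}.

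The delicate point is the passage from the $+n$ that appears in the Riccati explosion-count comparison (one forgone explosion per Hill interval) to the clean additive $+n$ in the weighted-sum bound. A naive integration of a raw pointwise bound $N(\lambda)\le\sum_j N_j(\lambda)+N_{\sao_{\beta,n}}(\lambda)+n$ against $t^{1/3}\,\d\lambda$ over the infinite half-line $(-\infty,-zt^{2/3}]$ is not finite, so the $+n$ must be realized as $n$ \emph{specific} forgone eigenvalues — one at the entrance of each Hill segment — whose contribution to the weighted sum $\sum_i(\cdot)_-$ is bounded by $1$ each rather than by $t^{1/3}$ times the range of integration. Making this careful bookkeeping rigorous is the main technical task; the remaining steps (exponentiation, expectation, independent factorization, and the law-preserving substitution $\sao_{\beta,n}\stackrel{\text{law}}{=}\sao_\beta+n\Xi$) are then routine.
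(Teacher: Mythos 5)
Your route --- couple the Riccati flows to get \eqref{e.compare1}--\eqref{e.compare2}, convert to the weighted sums via \eqref{e.reprint}--\eqref{e.reprint.}, exponentiate, and factor using the built-in independence and $\sao_{\beta,n}\stackrel{\text{law}}{=}\sao_\beta+n\Xi$ --- is exactly the paper's argument, and your derivation of the upper bound \eqref{e.local.up} is correct as written. You have also correctly isolated the only genuinely delicate step, namely turning the per-interval ``$+1$'' of \eqref{e.compare2} into the clean factor $e^{-n}$ of \eqref{e.local.low}; the paper's sketch simply asserts the resulting almost sure inequality without addressing this.

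However, your proposed resolution of that step --- that each forgone first explosion contributes at most $1$ to the weighted sum --- does not hold. Writing $E_j(\lambda):=\#\{x\in((j-1)\Xi,j\Xi]:|g_\lambda(x)|=\infty\}$, the comparison gives $E_j(\lambda)\le N_{j-1}(\lambda)+\ind\{E_j(\lambda)\ge 1\}$, and after integrating against $t^{1/3}\,\d\lambda$ over $(-\infty,-zt^{2/3}]$ the forgone term contributes
\begin{align*}
	t^{1/3}\,\big|\big\{\lambda\le -zt^{2/3}:\ E_j(\lambda)\ge 1\big\}\big|,
\end{align*}
which is bounded only by $(\saolambda_1 t^{1/3}+zt)_-$ (an explosion anywhere forces $\lambda\ge\saolambda_1$), i.e.\ by a random quantity of order $t$, not $1$. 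Put differently, the $n$ forgone eigenvalues sit near the (random) thresholds at which the first explosion enters each segment, not within $O(t^{-1/3})$ of the top of the integration range, so their weights $(\cdot\,t^{1/3}+zt)_-$ are not $O(1)$. Closing the gap needs extra input: either control on these thresholds (for typical noise the threshold for segment $j$ is near $(j-1)\Xi>0$, so the forgone weight vanishes, and the exceptional event must be shown to cost only $e^{-o(t^2)}$ inside the expectation), or a weaker, random error factor in place of $e^{-n}$ whose expectation is $e^{o(t^2)}$ --- which would still suffice for Theorem~\ref{t.main}, since even $n$ terms of size $O(t)$ give $nt=(-z)t^{5/3-a}=o(t^2)$ under \eqref{e.a.range}. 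As stated, the ``bounded by $1$ each'' claim is the one step that fails.
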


\section{Lower bound}
\label{s.lower}
Proposition~\ref{p.local} reduces the problem of analyzing $ \sao_\beta $ to analyzing each $ \hill_j $.
Based on this reduction, we will explain how to obtain the desired lower and upper bounds 
on the quantity of interest, i.e., the l.h.s.\ of \eqref{e.t.main} in Theorem~\ref{t.main}.

Let us begin by fixing the scale $ \Xi=t^a $.
Referring to the l.h.s.\ of \eqref{e.t.main}, since $ z<0 $ is fixed, we see that the relevant eigenvalues should be of order $ t^{2/3} $.
Refer to \eqref{e.riccati};
if we also match the order of $ \lambda $ to $ t^{2/3} $, then the linear potential should vary at scale $ t^{2/3} $.
This observation forces us to choose $ a<2/3 $, since otherwise we cannot expect $ x $ to be approximated by a constant on the interval $ ((j-1)\Xi,j\Xi] $.
On the other hand, we wish $ \Xi=t^a $ to be greater than the time scale between explosions of \eqref{e.riccati}.
Doing so gives us some room for analysis.
Performing scaling in \eqref{e.riccati} under the assumptions that $ \lambda,x \asymp t^{2/3} $ shows that explosions of the ODE should occur at scale $ t^{-1/3} $.
Hence we require $ a>-\frac13 $.
It turns out that our analysis does not require any further condition on $ a $, and we hereafter fix
\begin{align}
	\label{e.a.range}
	\Xi = t^{a},
	\quad
	a \in (-\tfrac13, \tfrac23).
\end{align}

We now return to the task of obtaining the lower bound.
The expression \eqref{e.local.low} contains three terms.
Let us focus on the first term and show that, for some suitable $ n $ given later in \eqref{e.n},
%A careful analysis shows that, for $ n= -z t^{2/3-a} $, the last expectation in~\eqref{e.local.low} is negligible after being taken $ t^{-2}\log(\Cdot) $ and passed to the limit $ t\to\infty $.
%We do not perform such an analysis here. 
%Also, the term $ e^{-n} $ in \eqref{e.local.low}  is also negligible because $ t^{-2}\log(e^{-n}) = -t^{-2}(\log(-z)+t^{2/3-a}) \to 0 $ by~\eqref{e.a.range}.
%These facts reduce our goal to showing
\begin{align}
	\label{e.lower.goal}
	\liminf_{t\to\infty} \frac{1}{t^2} \log \Big( \prod_{j=0}^{n-1}\Ex\big[ e^{ - \sum_{i=1}^\infty (\saolambda^{(j)}_it^{1/3}+zt)_- } \big]\Big)
	\geq
	\big(\tfrac{2}{\beta}\big)^5 \Phi_-\big( \big(\tfrac{\beta}{2}\big)^2 z \big).
\end{align}
Once this is done, we will argue that the remaining two terms in \eqref{e.local.low} are negligible.

Recall from \eqref{e.hill} that the randomness of $ \hill_{j} $ and hence of $ \{\saolambda^{(j)}_i\}_{i=1}^\infty $ comes solely from $ W $.
Our task is to find an `optimal' deviation of $ W $ that realizes the lower bound \eqref{e.lower.goal}.
Namely, we seek a deviation such that, 
when evaluating the l.h.s.\ of \eqref{e.lower.goal} around this deviation one obtains the desired lower bound.
As will be explained in Section~\ref{s.upper}, 
%particularly the first two paragraphs therein,
such a deviation can be chosen to be a constantly drifted \ac{BM}.
That is, we consider the deviation where $ W(y) $ behaves like a constantly drifted \ac{BM} with a drift $ t^{2/3}  v_j \d y $. 
Here $ v_j\in\R $ is a parameter, and the scaling $ t^{2/3} $ matches the aforementioned scaling of $ \lambda $ and $ x $.

We now evaluate the l.h.s.\ of \eqref{e.lower.goal} around the deviation.
Girsanov's theorem asserts that the probability of having such a deviation is
\begin{align}
	\label{e.penality}
	\text{Prob.} \approx \exp(-\tfrac12 t^{a+4/3} v_j^2).
\end{align}
Around such a deviation, Hill's operator behaves like the shifted Laplace operator $ -\frac{\d^2~}{\d y^2} + j \Xi + \frac{2}{\sqrt\beta} t^{2/3} v_j $, 
acting on $ [0,\Xi] $ with the Dirichlet boundary condition.
From this we calculate
\begin{align}
	\notag
	- \sum_{i=1}^\infty (\saolambda^{(j)}_it^{1/3}+zt)_- 
	&=
	- t^{1/3} \int_{(-\infty,-zt^{2/3}]} \d\lambda \, N_j(\lambda) 
\\	
	\notag
	&\approx
	-\frac{t^{1/3}\Xi}{\pi} \int_{-\infty}^{-zt^{2/3}} \d\lambda\, \sqrt{ (\lambda-\tfrac{2}{\sqrt\beta}t^{2/3}v_j-j\Xi)_+ }
\\
	\label{e.cost}
	&=
	-\frac{2t^{a+4/3}}{3\pi} \big( (-z-\tfrac{2}{\sqrt\beta}v_j-jt^{a-2/3})_+ \big)^{3/2}.
\end{align}
Combining~\eqref{e.penality}--\eqref{e.cost} gives
\begin{align}
\label{e.lower.bd}
\begin{split}
	t^{-4/3-a} &\log \Ex\big[ e^{ - \sum_{i=1}^\infty (\saolambda^{(j)}_it^{1/3}+zt)_- } \big] 
\\
	&\gtrsim
	-\tfrac12 v_j^2 - \tfrac{2}{3\pi} (-z-\tfrac{2}{\sqrt\beta}v_j-jt^{a-2/3})_+^{3/2}.
\end{split}
\end{align}
Here $ \gtrsim $ means $ \geq $ with some lower order (in $ t $) error terms. 

The approximate inequality \eqref{e.lower.bd} holds for all $ v_j\in\R $.
It is natural to optimize over $ v_j $.
Differentiating in $ v_j $ shows that the optimum is achieved at
\begin{align*}
	v_{j,*} := 4\pi^{-2}\beta^{-3/2}\big( -1 + \sqrt{1+(\tfrac{\beta\pi}{2})^2(-z-jt^{a-2/3})_+} \big).
\end{align*}
Note that $ v_{j,*}=0 $ for all $ j > -z t^{2/3-a} $, which suggests that we need only to invoke $ j\leq -z t^{2/3-a} $.
With this in mind, we set
\begin{align}
	\label{e.n}
	n= -z t^{2/3-a}.
\end{align}
Sum \eqref{e.lower.bd} over $ j=0,1,\ldots,n-1 $ for $ v_j=v_{j,*} $.
Within the result, the sum over $ j $ can be recognized as a Riemann sum, with $ jn^{a-2/3} $ approximating a continuous variable $ \nu\in[0,\infty) $.
This gives
\begin{align}
\label{e.lower.bd.}
%\begin{split}
%	&\liminf_{t\to\infty} 
%	\frac{1}{t^2} \log \Big( \prod_{j=0}^{n-1} \Ex\big[ e^{ - \sum_{i=1}^\infty (\saolambda^{(j)}_it^{1/3}+zt)_- } \big] \Big)
%\\
	(\text{l.h.s.\ of } \eqref{e.lower.goal})
	\geq
	- \int_0^{\infty} \d \nu \ \tfrac{1}{2} v^2_*(\nu) + \big( (-z-\tfrac{2}{\sqrt\beta}v_*(\nu)-\nu)_+ \big)^{3/2},
%	=:
%	\Psi(v_*),
%\end{split}
\end{align}
where $ v_*(\nu) :=  4\pi^{-2}\beta^{-3/2}(-1 + \sqrt{1+(\tfrac{\beta\pi}{2})^2(-z-\nu)_+}) $.
The integral in \eqref{e.lower.bd.} is explicit and can be evaluated to be $ (2/\beta)^{5} \Phi_-((\beta/2)^2z) $.

We have concluded \eqref{e.lower.goal} for the $ n $ in \eqref{e.n}.
A careful analysis shows that, for such an $ n $, 
the last expectation in~\eqref{e.local.low} is negligible after being taken $ t^{-2}\log(\Cdot) $ and passed to the limit $ t\to\infty $.
The analysis is too involved for the purpose of this article and hence not presented.
The term $ e^{-n} $ in \eqref{e.local.low}  is also negligible because $ t^{-2}\log(e^{-n}) = -t^{-2}(\log(-z)+t^{2/3-a}) \to 0 $ by~\eqref{e.a.range}.
This concludes the discussion of the lower bound.

\section{Upper bound, the WKB condition}
\label{s.upper}

In Section \ref{s.lower}, we utilized a certain type of deviations of $ W $
--- namely constantly drifted \ac{BM} --- to produce the desired lower bound.
To complete the proof, we need to argue that such deviations are optimal, asymptotically as $ t\to\infty $.
We refer to this assertion as the \textbf{WKB condition}.
The terminology is motivated by the fact that our analysis in Sections \ref{s.local}--\ref{s.lower} can be interpreted as the WKB approximation of the stochastic Airy operator.

The WKB condition is by no means obvious in the current context.
To see why, recall that the \ac{BM} enters Hill's operator \eqref{e.hill} through the derivative $ W' $.
Consider the Fourier transform of $ W' $ on the interval $ [0,\Xi] $, i.e., $ W'(y) = \sum_{k\in\Z} W_k e^{2\pi\img y/\Xi} $.
A priori, since $ W' $ is \emph{very rough}, it seems that the high frequency modes $ W_k $, $ k\gg 1 $, could have significant impacts on the spectrum of Hill's operator.
The WKB condition, however, asserts that only the constant mode $ W_0 $ matters for the \ac{LDP} in question. 
Let us further emphasize that the WKB condition may be violated for some other cost functions.
More precisely, here we are concerned with
$
	\Ex[ \exp(\sum_{i=1}^\infty \til{\psi}_t(\hilllambda^{(j)}_i) ],
$
with the cost function $ \til{\psi}_t(\lambda) := -(\lambda t^{1/3}+zt)_- $.
As claimed previously and will be verified in the sequel,
the major contribution of this expectation comes from configurations with constantly drifted $ W $.
On the other hand, we expect that there exists some other cost function $ \hat{\psi}_t $, such that the major contribution of
%\begin{align*}
$
	\Ex[ \exp(\sum_{i=1}^\infty \hat{\psi}_t(\hilllambda^{(j)}_i) ]
$
%\end{align*}
arises from deviations where high frequency modes of $ W $ contribute. 
A class of cost functions that should enjoy the WKB condition have been studied in \cite{krajenbrink19}.

We now return to the task of verifying the WKB condition.
The first step is to argue that, we can replace the Dirichlet boundary condition for $ \hill_j $ with the periodic boundary condition.
This is proven in \cite{tsai18} by utilizing the interlacing of eigenvalues under different boundary conditions.
We do not repeat the technical argument here, and simply \emph{switch} to the periodic boundary condition hereafter.

We will verify the WKB condition at a deterministic level.
To set up the notation, for a real $ f\in C[0,1] $, consider 
\begin{align}
	\label{e.H}
	H := -\tfrac{\d^2~}{\d y^2} + f',
	\quad
	\til{H} := -\tfrac{\d^2~}{\d y^2} + \tfrac{1}{\Xi}(f(\Xi)-f(0))
\end{align}
acting on $ [0,\Xi] $ with the periodic boundary condition. Note that $ f $ does \emph{not} have to be periodic.
The following Proposition encapsulates the WKB condition.
To see how, apply Proposition~\ref{p.wkb} with $ r=t^{2/3}z+j\Xi $ and with $ f=\frac{2}{\sqrt\beta}W $.
One finds that the linear statistics $ -\sum_{i=1}^\infty (t^{1/3}\hilllambda^{(j)}_i+tz)_- $ of $ \hill_j $  
is bounded above by the same linear statistics of an operator with $ W' $ replaced by the average $ \frac{1}{\Xi}(W(\Xi)-W(0)) $. 
This shows that, among all configurations of $ W $ with both ends $ W(0) $ and $ W(\Xi) $ fixed,
the constantly drifted configuration performs the best.

\begin{prop}
\label{p.wkb}
For any real $ f\in C[0,1] $, consider the operators $ H $ and $ \til{H} $ defined in \eqref{e.H}, 
and let $ \{\lambda_{1}\leq\lambda_{2}\leq\ldots\} $ and $ \{\til{\lambda}_{1}\leq\til{\lambda}_{2}\leq\ldots\} $ denote their respective spectra.
For any $ r\in\R $ we have
\begin{align*}
	-\sum_{i=1}^\infty (r+\lambda_i)_- \leq -\sum_{i=1}^\infty (r+\til{\lambda}_i)_-. 
\end{align*}
\end{prop}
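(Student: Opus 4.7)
The plan is to pass to the Fourier orthonormal basis of $ L^2[0,\Xi] $ adapted to the periodic boundary condition, and then to invoke a Peierls--Bogoliubov (Berezin--Lieb) type inequality relating the trace of a convex function of a self-adjoint operator to the sum of that function applied to its diagonal entries.

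The first step is to recast both sides of the claimed inequality in trace form:
\begin{align*}
	-\sum_{i=1}^\infty (r+\lambda_i)_- = -\mathrm{Tr}\big((r+H)_-\big),
	\qquad
	-\sum_{i=1}^\infty (r+\til\lambda_i)_- = -\mathrm{Tr}\big((r+\til H)_-\big),
\end{align*}
where $ B_- $ denotes the negative part of a self-adjoint operator $ B $. Since both $ H $ and $ \til H $ are bounded below with discrete spectrum accumulating at $ +\infty $, the operators $ (r+H)_- $ and $ (r+\til H)_- $ are positive and finite-rank, and the traces are automatically finite. Thus it suffices to show $ \mathrm{Tr}((r+H)_-) \geq \mathrm{Tr}((r+\til H)_-) $.

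Next, I would select the Fourier basis $ e_n(y) := \Xi^{-1/2}\exp(2\pi\img n y/\Xi) $, $ n\in\Z $. Because $ |e_n(y)|^2 \equiv 1/\Xi $ is constant in $ y $, the pairing of $ |e_n|^2 $ with $ f' $ (interpreted via the quadratic form, equivalently integration by parts and the periodic boundary condition) collapses to a boundary term, yielding
\begin{align*}
	\langle e_n, H e_n\rangle
	=
	\Big(\frac{2\pi n}{\Xi}\Big)^2 + \frac{f(\Xi)-f(0)}{\Xi},
\end{align*}
which is exactly an eigenvalue of $ \til H $. As $ n $ ranges over $ \Z $, the multiset $ \{\langle e_n, H e_n\rangle\}_{n\in\Z} $ coincides with the spectrum $ \{\til\lambda_i\}_{i\geq 1} $ counted with multiplicity. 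Applying the Peierls--Bogoliubov inequality --- which follows from Jensen's inequality for the convex function $ \phi(x) = x_- $ applied to each spectral probability measure $ \mu_{e_n}(\,\cdot\,) := \langle e_n,\ind_{\bullet}(r+H)e_n\rangle $ --- then gives
\begin{align*}
	\mathrm{Tr}\big((r+H)_-\big)
	=
	\sum_{n\in\Z} \langle e_n, (r+H)_- e_n\rangle
	\geq
	\sum_{n\in\Z} \big(r+\langle e_n, H e_n\rangle\big)_-
	=
	\sum_{i=1}^\infty (r+\til\lambda_i)_-,
\end{align*}
which is the desired inequality.

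The main technical subtlety I anticipate is regularity: in the intended application $ f = \tfrac{2}{\sqrt\beta} W $ is only a Brownian motion, so $ f' $ is distributional and $ H $ must be constructed through its quadratic form. The cleanest resolution is to prove the inequality first for $ f \in C^1 $, where the computation above is literal, and then approximate $ f $ by a uniformly convergent mollified sequence $ f_\varepsilon \to f $ in $ C[0,\Xi] $; the diagonal entries depend only on $ (f_\varepsilon(\Xi)-f_\varepsilon(0))/\Xi $, which converges trivially, while each eigenvalue $ \lambda_i(H_\varepsilon) $ converges to $ \lambda_i(H) $ by standard form convergence, so passage to the limit is routine.
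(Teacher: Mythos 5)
Your proposal is correct, and it reaches the result by a genuinely different key lemma than the paper. The shared engine is the same observation: the Fourier modes $e_n=\Xi^{-1/2}e^{2\pi\img ny/\Xi}$ have constant modulus squared, so the diagonal entries $\langle e_n,He_n\rangle$ of $H$ in that basis collapse exactly to the eigenvalues of $\til H$, which in turn are just the Laplacian eigenvalues shifted by $(f(\Xi)-f(0))/\Xi$. Where you diverge from the paper is in how you transfer diagonal-entry information to the linear statistic $\sum_i(r+\cdot)_-$: you invoke the Peierls--Bogoliubov/Berezin inequality $\mathrm{Tr}\,\phi(A)\geq\sum_n\phi(\langle e_n,Ae_n\rangle)$ for the convex function $\phi(x)=x_-$, i.e., Jensen applied to each spectral measure. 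The paper instead unwinds $-\sum_i(r+a_i)_-$ into the elementary variational form $\inf_{N}\sum_{i=1}^N(a_i+r)$ and combines it with the Ky Fan characterization $\sum_{i=1}^N\lambda_i=\inf_{\{\psi_i\}\text{ o.n.}}\sum_{i=1}^N\langle\psi_i,H\psi_i\rangle$, concluding $\sum_{i=1}^N\lambda_i\leq\sum_{i=1}^N\til\lambda_i$ for every $N$ and only then passing to the $\inf_N$. The two routes are essentially majorization in different guises (your Peierls--Bogoliubov follows from Schur--Horn plus Hardy--Littlewood--P\'olya, the paper's from Ky Fan plus the piecewise-linear envelope). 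Your argument is a line or two shorter and highlights convexity transparently; the paper's is self-contained in elementary linear algebra and does not need to invoke a named operator inequality, and the intermediate bound $\sum_{i=1}^N\lambda_i\leq\sum_{i=1}^N\til\lambda_i$ it extracts is somewhat more informative than the final statement alone. One minor remark: your closing mollification step is a safe fallback but is not really needed here; as you yourself note, the quadratic form $\int|\psi'|^2\,\d y+\int|\psi|^2\,\d f$ makes sense directly for $f$ merely continuous (Riemann--Stieltjes), and the diagonal computation goes through verbatim, which is exactly how the paper sidesteps regularity.
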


\begin{proof}
The first step is to recognize that, for any sequence of real numbers put in ascending order $ -\infty<a_1\leq a_2\leq \ldots $, we have
\begin{align}
	\label{e.var1}
	-\sum_{i=1}^\infty (r+a_i)_-
	=
	\inf_{N} \Big\{ \sum^N_{i=1} \big(a_i+r\big) \Big\}.
\end{align}

We will apply this inequality with $ a_i=\lambda_i $, so for now let us focus on bounding the sum $ \sum^N_{i=1} \lambda_i $, for generic $ N\in\Z_{\geq 0} $.
The operator $ H $ is self-adjoint on $ L^2[0,\Xi] $ (with the periodic boundary condition),
and hence the corresponding eigenvectors $ \phi_1,\phi_2,\ldots $ form an orthonormal basis for $ L^2[0,\Xi] $.
We can thus express the sum of the first $ N $ eigenvalues as $ \sum^N_{i=1} \lambda_i = \sum_{i=1}^N \langle \phi_i, H \phi_i \rangle_{L^2} $.
In fact, the sum can be characterized as the infimum of the same quantity when tested over orthonormal sets:
\begin{align}
	\label{e.var2}
	\sum^N_{i=1} \lambda_i 
	=
	\inf_{\{ \psi_1,\ldots,\psi_N \}} \sum_{i=1}^N \langle \psi_i, H \psi_i \rangle_{L^2},
\end{align}
where the infimum is taken over orthonormal $ \psi_1,\ldots,\psi_N $ in the domain of $ H $.
The assertion~\eqref{e.var2} can be proven by expanding each $ \psi_i $ into a linear combination of $ \{\phi_j\}_{j=1}^\infty $. We do not perform the calculation here.

To bound the r.h.s.\ of~\eqref{e.var2}, we insert a particular orthonormal set from the Fourier basis.
That is, we let $ \psi_{1,*}(y),\ldots,\psi_{N,*}(y) $ be the first $ N $ among
\begin{align*}
	\tfrac{1}{|\Xi|^{1/2}},
	\quad
	\tfrac{1}{|\Xi|^{1/2}} e^{2\pi\img y/|\Xi|},
	\quad
	\tfrac{1}{|\Xi|^{1/2}} e^{-2\pi\img y/|\Xi|},
	\quad
	\tfrac{1}{|\Xi|^{1/2}} e^{4\pi\img y/|\Xi|},
	\ldots.
\end{align*}
From~\eqref{e.var2} we obtain
\begin{align*}
	\sum^N_{i=1} \lambda_i 
	\leq
	\sum_{i=1}^N \langle \psi_{i,*}, H \psi_{i,*} \rangle_{L^2}
	=
	\sum_{i=1}^N \int_0^{\Xi} \big( \d y \,  |\psi_{i,*}'|^2 + \d f(y) \, |\psi_{i,*}|^2 \big),
\end{align*}
where the integral against $ \d f(y) $ is interpreted in the Riemann--Stieltjes sense. 
Noting that $ |\psi_{i,*}|^2 \equiv \frac{1}{\Xi} $ and referring to \eqref{e.H}, we see that the last sum is equal to $ \sum_{i=1}^N \langle \psi_{i,*}, \til{H} \psi_{i,*} \rangle_{L^2} $.
Further, since $ \til{H} $ is just a shifted Laplace operator, the Fourier vectors $ \psi_{i,*} $, $ i=1,\ldots,N $, are the first $ N $ eigenvectors of $ \til{H} $.
Consequently, the last sum is equal to $ \sum_{i=1}^N \til{\lambda}_i $, and therefore
\begin{align}
	\label{e.compare}
	\sum^N_{i=1} \lambda_i \leq \sum_{i=1}^N \til{\lambda}_i.
\end{align}

Now, apply~\eqref{e.var1} with $ a_i=\lambda_i $, use \eqref{e.compare} to bound the result, and apply~\eqref{e.var1} with $ a_i=\til\lambda_i $ in reverse.
This concludes the desired result.
\end{proof}

Proposition~\ref{p.wkb} verifies the WKB condition.
The upper bound can now be proven in the same fashion as the lower bound.

%%%%%%%%%%%%%%%%%%%%%%%%%%%%%%%%%
% References
%%%%%%%%%%%%%%%%%%%%%%%%%%%%%%%%%

\end{document}